\newtheorem{theorem}{Theorem}
\newtheorem{corollary}[theorem]{Corollary}
\theoremstyle{definition}
\newtheorem{question}[theorem]{Question}
\def \P {\mathbb{P}}
\def \I {\mathcal{I}}
\DeclareMathOperator{\Supp}{Supp}
\begin{document}
\bibliographystyle{amsplain}
\title{On the $p$-adic Second Main Theorem}
\author{Aaron Levin}
\address{Department of Mathematics\\Michigan State University\\East Lansing, MI 48824}
\email{adlevin@math.msu.edu}
\date{}

\begin{abstract}
We study the Second Main Theorem in non-archimedean Nevanlinna theory, giving an improvement to the non-archimedean Second Main Theorems of Ru and An in the case where all the hypersurfaces have degree greater than one and all intersections are transverse.  In particular, under a transversality assumption, if $f$ is a nonconstant non-archimedean analytic map to $\mathbb{P}^n$ and $D_1,\ldots, D_q$ are hypersurfaces of degree $d$, we prove the defect relation
\begin{equation*}
\sum_{i=1}^q\delta_f(D_i)\leq n-1+\frac{1}{d},
\end{equation*}
which is sharp for all positive integers $n$ and $d$.
\end{abstract}

\maketitle

\section{Introduction}

Classical Nevanlinna theory involves the study of the distribution of values of meromorphic functions.  It may be viewed as giving a quantitative generalization of Picard's theorem that a nonconstant entire function omits at most one complex value.  The foundation of the theory consists of two aptly named fundamental results:  the First Main Theorem, which is a straightforward consequence of Jensen's formula, and the deeper Second Main Theorem.  In higher-dimensional Nevanlinna theory, where one studies, for example, holomorphic maps from the complex numbers to complex projective varieties, the First Main Theorem readily extends while generalizations of the Second Main Theorem remain largely conjectural.

In analogy with the theory for complex numbers, a version of Nevanlinna theory has been constructed for $p$-adic meromorphic functions on $\mathbb{C}_p$, the completion of the algebraic closure of the $p$-adic numbers $\mathbb{Q}_p$.  More generally, the theory can be developed for any algebraically closed field $K$ complete with respect to a non-archimedean absolute value.

In this setting, for analytic maps to projective space, the First Main Theorem and the Second Main Theorem (due to Ru \cite{Ru}) take the following form (see Section~\ref{snot} for the definitions):

\begin{theorem}[First Main Theorem]
Let $D$ be a hypersurface in $\mathbb{P}^N$ over $K$ of degree $d$.  Let $f:K\to \mathbb{P}^N$ be a nonconstant analytic map with $f(K)\not\subset D$.  Then for all $r>0$,
\begin{equation*}
m_f(r,D)+N_f(r,D)=dT_f(r)+O(1).
\end{equation*}
\end{theorem}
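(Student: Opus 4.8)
The plan is to reduce the First Main Theorem to the non-archimedean Poisson--Jensen formula, which in this setting is an exact identity and carries essentially all of the content. Fix a \emph{reduced} representation $f=[f_0:\cdots:f_N]$, meaning the $f_i$ are entire functions on $K$ with no common zero, and let $Q=\sum_{|I|=d}a_Ix^I$ be a homogeneous polynomial of degree $d$ cutting out $D$. Set $g=Q(f_0,\dots,f_N)$; the hypothesis $f(K)\not\subset D$ says precisely that $g$ is a nonzero entire function. Writing $\|h\|_r$ for the Gauss norm of an entire function $h$ at radius $r$ (equivalently $\sup_{|z|\le r}|h(z)|$, by the non-archimedean maximum modulus principle) and $\|f\|_r=\max_i\|f_i\|_r$, the definitions of Section~\ref{snot} say, up to a bounded term, that $T_f(r)=\log\|f\|_r$, that $N_f(r,D)=N_g(r)$ is the integrated counting function of the zeros of $g$, and that $m_f(r,D)=d\log\|f\|_r-\log\|g\|_r$. (That $m_f(r,D)\ge -O(1)$, as a proximity function ought to be, is just the ultrametric estimate $\|g\|_r\le(\max_I|a_I|)\,\|f\|_r^{\,d}$.)

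Granting these translations, the claimed equality $m_f(r,D)+N_f(r,D)=dT_f(r)+O(1)$ collapses to
\begin{equation*}
\log\|g\|_r=N_g(r)+O(1),
\end{equation*}
so the one substantive step is to prove this for an arbitrary nonzero entire function $g=\sum_{n\ge 0}a_nz^n$ on $K$. I would argue via the Newton (valuation) polygon: one has $\|g\|_r=\max_n|a_n|r^n$, the function $\log t\mapsto\log\|g\|_t$ is convex and piecewise linear, and its slope is $n_g(t)$, the number of zeros of $g$ in $|z|\le t$ counted with multiplicity, at all but countably many $t$. Integrating from a fixed $r_0$ gives $\log\|g\|_r=\log\|g\|_{r_0}+\int_{r_0}^r n_g(t)\,\frac{dt}{t}$, which differs from $N_g(r)=\int_{r_0}^r\bigl(n_g(t)-n_g(0)\bigr)\,\frac{dt}{t}+n_g(0)\log r$ by a constant independent of $r$; this is exactly $\log\|g\|_r=N_g(r)+O(1)$. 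Applying it to $g=Q(f)$ and recombining with the definitions above proves the First Main Theorem.

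It remains only to check that the auxiliary choices are harmless: two reduced representations of $f$ differ by a common factor that is a nowhere-vanishing entire function on $K$, hence a nonzero constant (a non-constant entire function on $K$ has a zero), and rescaling $Q$ changes $g$ only by a nonzero constant; both modifications are absorbed into the $O(1)$. I do not expect a genuine obstacle anywhere. The First Main Theorem is ``soft'', its proof being essentially a repackaging of the non-archimedean Poisson--Jensen formula, much as the classical First Main Theorem repackages Jensen's formula; the depth of the subject lies entirely in the Second Main Theorem and the defect relation that form the object of this paper.
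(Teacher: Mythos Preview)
The paper does not actually prove the First Main Theorem; it is stated as background, and the Introduction remarks only that it ``is a straightforward consequence of Jensen's formula.'' Your argument is correct and is exactly the standard proof the paper alludes to: one unwinds the definitions to reduce the identity to $\log|g|_r = N_g(r) + O(1)$ for the entire function $g = Q\circ f$, and then reads this off from the Newton polygon (the non-archimedean Poisson--Jensen formula). The auxiliary remarks about independence of the reduced representation and of the choice of $Q$ are also correct, as is the observation that a nonconstant entire function on $K$ necessarily has a zero. One cosmetic point: the paper writes $|h|_r$ for the Gauss norm of an entire function $h$ and reserves $\|f\|_r$ for $\log\max_i|f_i|_r$ (so the logarithm is already built in), whereas you use $\|\cdot\|_r$ for the multiplicative norm throughout; this is purely notational and does not affect the mathematics.
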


\begin{theorem}[Second Main Theorem (Ru \cite{Ru})]
Let $D_1,\ldots, D_q$ be hypersurfaces in $\mathbb{P}^n$ over $K$ in general position.  Let $f:K\to \mathbb{P}^n$ be a nonconstant analytic map whose image is not completely contained in any of the hypersurfaces $D_1,\ldots, D_q$.  Then for all $r\geq 1$,
\begin{equation*}
\sum_{i=1}^q \frac{m_f(r,D_i)}{\deg D_i}\leq  nT_f(r)+O(1).
\end{equation*}
\end{theorem}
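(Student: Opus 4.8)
The plan is to prove this by a short, direct estimate on sup-norms; no appeal to Wronskians, Cartan's theorem, or Veronese embeddings is needed, and the non-archimedean setting makes the argument considerably shorter than its complex counterpart. Write $f=[f_0:\dots:f_n]$ with $f_0,\dots,f_n$ entire on $K$ and without common zero, let $Q_i$ be a homogeneous form of degree $d_i=\deg D_i$ defining $D_i$, and set $Q_i(f)=Q_i(f_0,\dots,f_n)$, an entire function which by hypothesis ($f(K)\not\subseteq D_i$) is not identically zero. Write $|g|_r=\sup_{|z|\le r}|g(z)|$, $\|f\|_r=\max_j|f_j|_r$, so $T_f(r)=\log\|f\|_r+O(1)$. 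Combining the First Main Theorem with the non-archimedean Jensen formula $N_f(r,D_i)=\log|Q_i(f)|_r+O(1)$ gives $m_f(r,D_i)=d_i\log\|f\|_r-\log|Q_i(f)|_r+O(1)$, hence
\begin{equation*}
\sum_{i=1}^q\frac{m_f(r,D_i)}{d_i}=q\,T_f(r)-\sum_{i=1}^q\frac1{d_i}\log|Q_i(f)|_r+O(1),
\end{equation*}
so it suffices to prove $\sum_{i=1}^q\frac1{d_i}\log|Q_i(f)|_r\ge(q-n)\,T_f(r)-O(1)$ for $r\ge 1$.

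The heart of the matter is a pointwise statement: for every $r$ at most $n$ of the forms are ``small'', i.e.\ the set $S(r)=\{\,i:|Q_i(f)|_r<c\,\|f\|_r^{d_i}\,\}$ has $|S(r)|\le n$, for some constant $c>0$ depending only on $D_1,\dots,D_q$. This is where general position enters. For any $(n{+}1)$-element subset $\{i_0,\dots,i_n\}\subseteq\{1,\dots,q\}$ the forms $Q_{i_0},\dots,Q_{i_n}$ have no common zero in $\mathbb{P}^n$, so by the homogeneous Nullstellensatz there is a fixed integer $M$ and homogeneous forms $A_{j,\alpha}$ with $x^\alpha=\sum_{j=0}^nA_{j,\alpha}Q_{i_j}$ for all monomials $x^\alpha$ of degree $M$. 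Substituting $(f_0,\dots,f_n)$, applying the ultrametric inequality, and using $|A_{j,\alpha}(f)|_r\le C\|f\|_r^{M-d_{i_j}}$, one gets $\|f\|_r^M\le C'\,\|f\|_r^M\max_j\bigl(|Q_{i_j}(f)|_r/\|f\|_r^{d_{i_j}}\bigr)$, so $\max_j|Q_{i_j}(f)|_r/\|f\|_r^{d_{i_j}}\ge 1/C'$; taking $c$ to be the minimum of these $1/C'$ over the finitely many $(n{+}1)$-subsets yields the claim, since then $S(r)$ contains no $(n{+}1)$-subset.

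I would then finish as follows. Fix $r\ge1$, normalised so that $\|f\|_r\ge1$. For $i\notin S(r)$ one has $\log|Q_i(f)|_r\ge d_i\log\|f\|_r+\log c$. For the at most $n$ indices $i\in S(r)$ I invoke the non-containment hypothesis in earnest: since $Q_i(f)\not\equiv 0$ and $r\mapsto|Q_i(f)|_r$ is non-decreasing, $|Q_i(f)|_r\ge|Q_i(f)|_1>0$, a positive bound independent of $r$ — a phenomenon with no complex-analytic analogue, and precisely what makes the argument close. Summing over $i$,
\begin{equation*}
\sum_{i=1}^q\frac1{d_i}\log|Q_i(f)|_r\;\ge\;\bigl(q-|S(r)|\bigr)\log\|f\|_r-O(1)\;\ge\;(q-n)\,T_f(r)-O(1),
\end{equation*}
which with the identity of the first paragraph gives $\sum_{i=1}^q\frac{m_f(r,D_i)}{d_i}\le n\,T_f(r)+O(1)$.

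The step I expect to need the most care is the ``at most $n$ small forms'' lemma: one must extract a genuinely uniform-in-$r$ constant $c>0$, i.e.\ a quantitative form of the general position hypothesis, and one must keep sight of the fact that it is exactly the non-containment hypothesis — unused until the very last step — that controls the surviving $\le n$ ``small'' terms, via the monotonicity of $r\mapsto|Q_i(f)|_r$. Everything else is routine bookkeeping with $O(1)$'s.
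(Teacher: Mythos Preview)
Your argument is correct and is essentially the same as the one the paper uses. The paper does not give a standalone proof of Ru's theorem, but the identical mechanism appears as the opening step in the proof of Theorem~\ref{tmain}: reindex so that $m_f(r,D_1)\ge\cdots\ge m_f(r,D_q)$, apply the homogeneous Nullstellensatz to any $n{+}1$ of the forms to force $m_f(r,D_i)\le C$ for $i>n$, and bound the top $n$ terms by the First Main Theorem. Your set $S(r)$ and uniform constant $c$ are exactly this reindexing made explicit, and your monotonicity bound $|Q_i(f)|_r\ge|Q_i(f)|_1>0$ is equivalent to invoking $m_f(r,D_i)/d_i\le T_f(r)+O(1)$. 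One cosmetic remark: with the paper's definition $m_f(r,D)=\log\bigl(\|f\|_r^d/|Q\circ f|_r\bigr)$, the identity $m_f(r,D_i)=d_i\log\|f\|_r-\log|Q_i(f)|_r$ holds on the nose, so you need neither Jensen nor the First Main Theorem to obtain it.
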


Here, and throughout the paper, the implied constant in the $O(1)$ is independent of $r$.  In the case where all the hypersurfaces are hyperplanes, a Second Main Theorem with a ramification term was proven by Boutabaa in his Ph.D. thesis \cite{Bou} (see also \cite{Bou2} and the papers of Khoai and Tu \cite{KT} and Cherry and Ye \cite{CY}).

More generally, An \cite{An} proved a Second Main Theorem for analytic maps to projective varieties.

\begin{theorem}[Second Main Theorem (An \cite{An})]
\label{SMTAn}
Let $X\subset \mathbb{P}^N$ be a projective variety over $K$ of dimension $n\geq 1$.  Let $D_1,\ldots, D_q$ be hypersurfaces in $\mathbb{P}^N$ over $K$ in general position with $X$.  Let $f:K\to X$ be a nonconstant analytic map whose image is not completely contained in any of the hypersurfaces $D_1,\ldots, D_q$.  Then for all $r\geq 1$,
\begin{equation*}
\sum_{i=1}^q \frac{m_f(r,D_i)}{\deg D_i}\leq  nT_f(r)+O(1).
\end{equation*}
\end{theorem}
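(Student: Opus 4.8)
The plan is to run the standard argument behind the non-archimedean Second Main Theorem, whose one substantive input is an effective Nullstellensatz for the defining forms of the $D_i$ relative to the homogeneous ideal $I(X)$. When $q\le n$ the conclusion is immediate — each term satisfies $m_f(r,D_i)/\deg D_i\le T_f(r)+O(1)$ by the First Main Theorem and $N_f(r,D_i)\ge 0$ for $r\ge 1$ — so I would assume $q\ge n+1$. Write $D_i=\{Q_i=0\}$ with $Q_i$ homogeneous of degree $d_i$, and fix a reduced representation $f=(f_0,\dots,f_N)$ of $f$ by entire functions without common zero, so that $\|f\|_r:=\max_j|f_j|_r$ and $h(f)\equiv 0$ for every homogeneous $h\in I(X)$.

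\emph{Step 1 (algebraic lemma).} For each $(n+1)$-element subset $I\subseteq\{1,\dots,q\}$, general position forces $X\cap\bigcap_{i\in I}D_i=\emptyset$, so the affine cone over $X$ meets $\bigcap_{i\in I}\{Q_i=0\}$ only at the origin; hence $\sqrt{(Q_i:i\in I)+I(X)}$ is the irrelevant ideal, and taking homogeneous parts yields an integer $s_I$ and homogeneous polynomials $g_{jiI}$ with $\deg(g_{jiI}Q_i)=s_I$ such that $x_j^{s_I}\equiv\sum_{i\in I}g_{jiI}Q_i\pmod{I(X)}$ for all $j$. Substituting $f$ (the $I(X)$-part dies) gives the identity of entire functions
\begin{equation*}
f_j^{s_I}=\sum_{i\in I}g_{jiI}(f)\,Q_i(f),\qquad 0\le j\le N.
\end{equation*}

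\emph{Step 2 (radius estimate).} Taking $r$-norms and using submultiplicativity of the non-archimedean Gauss norm (whence $|g_{jiI}(f)|_r\le C_I\|f\|_r^{\,s_I-d_i}$ with $C_I$ independent of $r$), one gets
\begin{equation*}
\|f\|_r^{\,s_I}\le C_I\max_{i\in I}\|f\|_r^{\,s_I-d_i}\,|Q_i(f)|_r,
\end{equation*}
so for each $r$ some $i\in I$ satisfies $|Q_i(f)|_r\ge C_I^{-1}\|f\|_r^{d_i}$, hence $m_f(r,D_i)=O(1)$ for that $i$ by the definition of the proximity function. Taking $M$ to be the largest such bound over the finitely many subsets $I$, we conclude that at each radius $r$ at most $n$ of $D_1,\dots,D_q$ have $m_f(r,D_i)>M$, since an $(n+1)$-subset of ``large'' indices would contradict the preceding sentence.

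\emph{Step 3 (summation).} Fix $r\ge 1$ and split $\sum_{i=1}^q m_f(r,D_i)/d_i$ according to whether $m_f(r,D_i)>M$: the at most $n$ large terms each contribute $m_f(r,D_i)/d_i\le T_f(r)+O(1)$ by the First Main Theorem and $N_f(r,D_i)\ge 0$, while the remaining terms contribute $O(1)$ in total, giving $\sum_{i=1}^q m_f(r,D_i)/d_i\le nT_f(r)+O(1)$. Every constant here is uniform in $r$ because the finitely many identities of Step 1, and hence the $C_I$ and the threshold $M$, are fixed once and for all; and, in contrast with the archimedean theory, no logarithmic-derivative error term intervenes. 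The only real content is Step 1 — correctly formulating the Nullstellensatz on the cone over $X$ (i.e.\ modulo $I(X)$) rather than on $\mathbb{P}^N$; once that is in place, Steps 2 and 3 are purely formal.
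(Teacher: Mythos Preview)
Your proposal is correct and follows essentially the same approach that the paper uses. The paper does not give a standalone proof of this theorem (it is quoted as An's result), but the key argument is recalled inside the proof of Theorem~\ref{tmain}: after reindexing so that $m_f(r,D_1)\ge\cdots\ge m_f(r,D_q)$, one applies the Nullstellensatz modulo $I(X)$ to $Q_1,\dots,Q_{n+1}$ to conclude $m_f(r,D_i)\le C$ for $i>n$, and then bounds the remaining $n$ terms by the First Main Theorem. Your Steps~1--3 reproduce exactly this, with the minor cosmetic difference that you run the Nullstellensatz over all $(n+1)$-subsets and take a uniform constant $M$, rather than reindexing at each $r$; this makes the $r$-independence of the $O(1)$ slightly more transparent but is the same argument.
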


The purpose of this paper is to give an improvement to this Second Main Theorem when none of the hypersurfaces are hyperplanes and all intersections are transverse.

\begin{theorem}
\label{tmain2}
Let $X\subset \mathbb{P}^N$ be a projective variety over $K$ of dimension $n\geq 1$.  Let $D_1,\ldots, D_q$ be hypersurfaces in $\mathbb{P}^N$ over $K$ such that all intersections amongst $D_1,\ldots, D_q$, and $X$ are transverse.  Let $f:K\to X$ be a nonconstant analytic map whose image is not completely contained in any of the hypersurfaces $D_1,\ldots, D_q$.  Then for all $r\geq 1$,
\begin{equation*}
\sum_{i=1}^q \frac{m_f(r,D_i)}{\deg D_i}\leq  \left(n-1+\max_i \frac{1}{\deg D_i}\right)T_f(r)+O(1).
\end{equation*}
In particular, if $\deg D_i=d$ for all $i$, then for all $r\geq 1$,
\begin{equation}
\label{shin}
\frac{1}{d}\sum_{i=1}^q m_f(r,D_i)\leq  \left(n-1+\frac{1}{d}\right)T_f(r)+O(1).
\end{equation}

\end{theorem}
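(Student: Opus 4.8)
The plan is to exploit the fact that the source $K$ is one‑dimensional: $f$ necessarily factors through an algebraic curve, and the whole problem collapses onto non‑archimedean Nevanlinna theory for $\mathbb{P}^1$. First I would reduce to a rational curve. Let $C\subset X$ be the Zariski closure of $f(K)$; since $f$ is nonconstant, $C$ is an irreducible curve, and the hypothesis that $f(K)$ lies in no $D_i$ forces $C\not\subset D_i$ for every $i$. Let $\eta\colon\tilde C\to C\subset X$ be the normalization and $f'\colon K\to\tilde C$ the induced (nonconstant) analytic map, so $f=\eta\circ f'$. Since there is no nonconstant analytic map from $K$ into a smooth projective curve of positive genus (a non‑archimedean analogue of Picard's theorem), $\tilde C\cong\mathbb{P}^1$. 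Let $\delta=\deg C$ be the degree with respect to $\mathcal{O}_X(1)$; then $\eta^*\mathcal{O}_X(1)$ has degree $\delta$ on $\tilde C$, so $T_f(r)=\delta\,T_{f'}(r)+O(1)$ with $T_{f'}(r)\to\infty$. If $q\le n-1$ the theorem is immediate from $m_f(r,D_i)\le(\deg D_i)T_f(r)+O(1)$, so I will assume $q\ge n$.

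Next I would rewrite the left–hand side as a proximity function on $\mathbb{P}^1$. Writing $D_i=\{Q_i=0\}$ with $\deg Q_i=d_i$, the fact that $C\not\subset D_i$ makes $\eta^*D_i$ an effective divisor of degree $d_i\delta$ on $\tilde C\cong\mathbb{P}^1$, and the zeros of $Q_i(f)$ correspond under $f'$ to $\eta^*D_i$, so $N_f(r,D_i)=N_{Q_i(f)}(r)=N_{f'}(r,\eta^*D_i)$. Applying the First Main Theorem on $X$ and then the First Main Theorem for $f'$ on $\mathbb{P}^1$ gives $m_f(r,D_i)=m_{f'}(r,\eta^*D_i)+O(1)$, hence
\begin{equation*}
\sum_{i=1}^q\frac{m_f(r,D_i)}{d_i}=m_{f'}(r,\Delta)+O(1),\qquad \Delta:=\sum_{i=1}^q\frac{1}{d_i}\,\eta^*D_i ,
\end{equation*}
a $\mathbb{Q}$–divisor on $\mathbb{P}^1$ of degree $q\delta$.

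The main step is to bound the coefficients of $\Delta$, and this is where transversality and $\deg D_i>1$ are used. Fix $P\in\tilde C$, put $c=\eta(P)\in C\subset X$, and let $\nu_P$ be the multiplicity at $c$ of the branch of $C$ determined by $P$ (so $\nu_P\le\operatorname{mult}_c C\le\delta$), with tangent direction $v_P\in T_cX$. The coefficient of $P$ in $\Delta$ is $\sum_{i:\,c\in D_i}\tfrac1{d_i}\ord_P(\eta^*D_i)$. By transversality each $D_i$ through $c$ is smooth at $c$, and $\ord_P(\eta^*D_i)\le\deg\eta^*D_i=d_i\delta$, so each summand is $\le\delta$; transversality also forces at most $n$ of the $D_i$ to pass through $c$. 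If exactly $n$ do, their tangent hyperplanes at $c$ intersect only at $0$ in $T_cX$, so $v_P$ lies outside at least one of them, say $T_cD_{i_0}$, and a local computation then gives $\ord_P(\eta^*D_{i_0})=\nu_P\le\delta$, so the $D_{i_0}$–summand is at most $\delta/d_{i_0}\le\bigl(\max_i\tfrac1{d_i}\bigr)\delta$. Thus the coefficient of $P$ in $\Delta$ is at most $\bigl(\max_i\tfrac1{d_i}\bigr)\delta+(n-1)\delta$, and at most $(n-1)\delta$ when fewer than $n$ of the $D_i$ meet $c$. The hard part will be exactly this local claim — that the branch tangent $v_P$ escapes some $T_cD_{i_0}$ and that transversality of $D_{i_0}$ to the branch makes $\ord_P(\eta^*D_{i_0})$ equal precisely the branch multiplicity $\nu_P$; everything else is bookkeeping with the height machine and the First Main Theorem.

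To finish, note that since $q\ge n$ we have $\deg\Delta=q\delta>\bigl(n-1+\max_i\tfrac1{d_i}\bigr)\delta$, so $\Delta$ is supported on at least two distinct points, each with coefficient at most $\bigl(n-1+\max_i\tfrac1{d_i}\bigr)\delta$. The non‑archimedean Second Main Theorem for $\mathbb{P}^1$ (the case $n=1$ of Ru's theorem) bounds the sum of $m_{f'}(r,\cdot)$ over those points by $T_{f'}(r)+O(1)$, so
\begin{equation*}
m_{f'}(r,\Delta)\le\Bigl(n-1+\max_i\tfrac1{d_i}\Bigr)\delta\,T_{f'}(r)+O(1)=\Bigl(n-1+\max_i\tfrac1{d_i}\Bigr)T_f(r)+O(1),
\end{equation*}
which together with the previous display gives the asserted inequality; \eqref{shin} is the special case $d_i=d$.
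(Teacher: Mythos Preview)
Your very first step is a genuine gap: the Zariski closure of $f(K)$ in $X$ need not be a curve. Non-archimedean analytic maps $K\to\mathbb{P}^N$ can be algebraically nondegenerate. For example, let $h(z)=\sum_{n\ge 0}c^{n^2}z^n$ with $0<|c|<1$, a transcendental entire function on $K$, and set $f(z)=(1,z,h(z))\in\mathbb{P}^2$. If $P(z,h(z))\equiv 0$ for some nonzero $P(x,y)=\sum_{j=0}^d p_j(x)y^j$ with $p_d\neq 0$, then for large $r$ one has $|p_d|_r|h|_r^d>\max_{j<d}|p_j|_r|h|_r^j$ because $|h|_r$ outgrows every polynomial in $r$; the non-archimedean multiplicativity $|fg|_r=|f|_r|g|_r$ then forces $|P(z,h(z))|_r>0$, a contradiction. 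So $f$ has Zariski-dense image. The one-dimensionality of $K$ as an analytic source does not constrain the algebraic dimension of the image, just as in the complex case $z\mapsto(1,z,e^z)$ is Zariski-dense in $\mathbb{P}^2$. Since the rest of your argument (the normalization $\tilde C\cong\mathbb{P}^1$ via non-archimedean Picard, the pullbacks $\eta^*D_i$, the coefficient bound on $\Delta$, and the final appeal to the $\mathbb{P}^1$ Second Main Theorem) is built entirely on this reduction, the proof does not go through.

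The paper's argument works directly in $\mathbb{P}^N$ with no reduction to a curve. After ordering so that $m_f(r,D_1)\ge\cdots\ge m_f(r,D_q)$, a Nullstellensatz argument (as in An's and Ru's proofs) gives $m_f(r,D_i)=O(1)$ for $i>n$. The new ingredient is the treatment of $m_f(r,D_n)$: choose hyperplanes $H_1,\dots,H_t$ through the finitely many points of $X\cap D_1\cap\cdots\cap D_n$, in general position with $X$ and $D_1,\dots,D_{n-1}$. Transversality means $L_1\cdots L_t$ lies in the ideal of $Q_1,\dots,Q_n$ on $X$ (no power needed), whence $m_f(r,D_n)\le\sum_j m_f(r,H_j)+O(1)$. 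Applying the same ``at most $n$ proximity functions are large'' argument to the list $D_1,\dots,D_{n-1},H_1,\dots,H_t$, and using that the $H_j$ have degree~$1$, produces the factor $\max_i 1/\deg D_i$ on the $n$th term.
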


More generally, we prove (Theorem \ref{tmain}) an inequality in terms of an invariant related to the intersections between $D_1,\ldots, D_q$, and $X$.  In Theorem \ref{tsharp}, we will show that the inequality \eqref{shin} is sharp for $X=\mathbb{P}^n$ and all values of $n$ and $d$.

As a consequence of Theorem \ref{tmain2}, in Section \ref{SSMT} we recover a result of An, Wang, and Wong.

\begin{corollary}[An, Wang, Wong \cite{AWW}]
\label{AWW}
Let $D_1,\ldots, D_n$ be hypersurfaces in $\mathbb{P}^n$ intersecting transversally.  Then $\P^n\setminus\cup_{i=1}^nD_i$ is $K$-hyperbolic if $\deg D_i\geq 2$ for $1\leq i\leq n$.
\end{corollary}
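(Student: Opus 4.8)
The plan is to derive the corollary from Theorem \ref{tmain2} and the First Main Theorem by a short contradiction argument, the standard way one passes from a Second Main Theorem to a hyperbolicity statement. Recall that here $\P^n\setminus\bigcup_{i=1}^n D_i$ being $K$-hyperbolic means that every analytic map $f\colon K\to\P^n$ with $f(K)\cap\bigcup_{i=1}^n D_i=\emptyset$ is constant, so I would suppose for contradiction that such an $f$ is nonconstant.

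First I would check that the transversality hypothesis of Theorem \ref{tmain2} is met with $X=\P^n$: since $\P^n$ is smooth and $D_1,\ldots,D_n$ intersect transversally, all intersections amongst $D_1,\ldots,D_n$, and $\P^n$ are transverse. Since also $f(K)\not\subset D_i$ for every $i$, Theorem \ref{tmain2} applies (with $n=\dim\P^n$) and gives
\begin{equation*}
\sum_{i=1}^n\frac{m_f(r,D_i)}{\deg D_i}\leq\left(n-1+\max_i\frac{1}{\deg D_i}\right)T_f(r)+O(1)\leq\left(n-\tfrac12\right)T_f(r)+O(1),
\end{equation*}
the last inequality using $\deg D_i\geq 2$ for all $i$.

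Next I would bound the left-hand side from below. Since the image of $f$ avoids $D_i$, the pullback $f^*D_i$ has empty support, so $N_f(r,D_i)=0$ for all $r$; the First Main Theorem then gives $m_f(r,D_i)=\deg D_i\cdot T_f(r)+O(1)$, and dividing by $\deg D_i$ and summing over $i=1,\ldots,n$ yields
\begin{equation*}
\sum_{i=1}^n\frac{m_f(r,D_i)}{\deg D_i}=nT_f(r)+O(1).
\end{equation*}
Combining the two displays forces $\tfrac12 T_f(r)=O(1)$, so $T_f(r)$ is bounded for $r\geq 1$. But a nonconstant non-archimedean analytic map has unbounded characteristic function, $T_f(r)\to\infty$ as $r\to\infty$, a contradiction; hence every such $f$ is constant.

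There is no substantial obstacle: the argument is a routine combination of the First Main Theorem with the improved Second Main Theorem. The only inputs beyond Theorem \ref{tmain2} are the vanishing of the counting functions for a map omitting the $D_i$ and the growth of $T_f(r)$ for nonconstant $f$, both standard. The essential point — obtaining a coefficient $n-1+\max_i 1/\deg D_i$ that is strictly less than $n$ — is exactly what Theorem \ref{tmain2} supplies once every degree is at least $2$, and this is precisely the gain over An's Second Main Theorem (whose coefficient $n$ would only give $T_f(r)=O(1)$ vacuously).
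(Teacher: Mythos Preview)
Your proof is correct and follows essentially the same route as the paper: assume a nonconstant map omits the $D_i$, use the First Main Theorem to get $\sum_i m_f(r,D_i)/\deg D_i = nT_f(r)+O(1)$, compare with the bound $(n-1+\max_i 1/\deg D_i)T_f(r)+O(1)\leq (n-\tfrac12)T_f(r)+O(1)$ from Theorem~\ref{tmain2}, and contradict the unboundedness of $T_f(r)$. The paper in fact proves the slightly more general version for a projective variety $X\subset\P^N$ via the same argument, of which this is the case $X=\P^n$.
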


Thus, Theorem \ref{tmain2} may be viewed as giving a quantitative generalization of An, Wang, and Wong's result.

If we define the defect of $f$, with respect to a hypersurface $D$, by
\begin{equation*}
\delta_f(D)=\liminf_{r\to\infty} \frac{m_f(r,D)}{(\deg D)T_f(r)},
\end{equation*}
then Theorem \ref{tmain2} immediately implies the defect relations:
\begin{corollary}
Under the same hypotheses as Theorem {\rm \ref{tmain2}},
\begin{equation*}
\sum_{i=1}^q\delta_f(D_i)\leq n-1+\max_i \frac{1}{\deg D_i}.
\end{equation*}
In particular, if $\deg D_i=d$ for all $i$, then
\begin{equation*}
\sum_{i=1}^q\delta_f(D_i)\leq n-1+\frac{1}{d}.
\end{equation*}
\end{corollary}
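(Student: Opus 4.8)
The plan is to obtain the defect relation as a purely formal consequence of Theorem~\ref{tmain2}, by normalizing the Second Main Theorem inequality by $T_f(r)$ and passing to the limit $r\to\infty$. The one external fact needed is that a nonconstant analytic map $f\colon K\to X$ has $T_f(r)\to\infty$ as $r\to\infty$; this is standard in non-archimedean Nevanlinna theory (an analytic map of bounded characteristic is constant), and in particular guarantees $T_f(r)>0$ for all sufficiently large $r$, so that dividing by $T_f(r)$ is legitimate and $O(1)/T_f(r)\to 0$.

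First, dividing the inequality of Theorem~\ref{tmain2} through by $T_f(r)$ gives, for all sufficiently large $r$,
\begin{equation*}
\sum_{i=1}^q \frac{m_f(r,D_i)}{(\deg D_i)\,T_f(r)}\leq \left(n-1+\max_i \frac{1}{\deg D_i}\right)+\frac{O(1)}{T_f(r)}.
\end{equation*}
Next I would take $\liminf_{r\to\infty}$ of both sides. On the right-hand side the error term tends to $0$, so its $\liminf$ equals $n-1+\max_i\frac{1}{\deg D_i}$. On the left-hand side, the elementary inequality $\liminf_{r\to\infty}\sum_{i=1}^q a_i(r)\geq \sum_{i=1}^q\liminf_{r\to\infty}a_i(r)$ for a finite sum, combined with the definition $\delta_f(D_i)=\liminf_{r\to\infty}\frac{m_f(r,D_i)}{(\deg D_i)T_f(r)}$, shows that the $\liminf$ of the left-hand side is at least $\sum_{i=1}^q\delta_f(D_i)$. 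Since the displayed inequality holds for every large $r$, the $\liminf$ of its left-hand side is at most the $\liminf$ of its right-hand side; chaining the three inequalities yields
\begin{equation*}
\sum_{i=1}^q\delta_f(D_i)\leq n-1+\max_i \frac{1}{\deg D_i},
\end{equation*}
and the special case $\deg D_i=d$ for all $i$ follows immediately by specialization.

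There is no genuine obstacle here: all of the mathematical content — in particular the transversality hypothesis and the gain of one full unit over Theorem~\ref{SMTAn} — is already encapsulated in Theorem~\ref{tmain2}. The only points meriting any care are the superadditivity of $\liminf$ (needed precisely because the individual proximity ratios $m_f(r,D_i)/((\deg D_i)T_f(r))$ need not converge) and the divergence of $T_f(r)$, both of which are routine.
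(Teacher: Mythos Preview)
Your argument is correct and is exactly the intended one: the paper does not even write out a proof, stating only that Theorem~\ref{tmain2} ``immediately implies'' the defect relations, and your division by $T_f(r)$ followed by superadditivity of $\liminf$ is precisely how one makes this immediate implication explicit.
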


Again, by Theorem \ref{tsharp}, this last defect relation is sharp in the case of projective space.

We conclude this section with a comparison to results in classical Nevanlinna theory.  The counterpart to Ru's non-archimedean Second Main Theorem is a result of Er{\"e}menko and Sodin \cite{ES}.
\begin{theorem}[Er{\"e}menko, Sodin \cite{ES}]
\label{tES}
Let $D_1,\ldots, D_q$ be complex hypersurfaces in $\mathbb{P}^n$ in general position.  Let $f:\mathbb{C}\to \mathbb{P}^n$ be a nonconstant holomorphic map whose image is not completely contained in any of the hypersurfaces $D_1,\ldots, D_q$.  Then for any $\epsilon>0$,
\begin{equation*}
\sum_{i=1}^q \frac{m_f(r,D_i)}{\deg D_i}\leq  (2n+\epsilon)T_f(r)
\end{equation*}
for all $r>0$ outside a set of finite Lebesgue measure.
\end{theorem}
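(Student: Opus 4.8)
The plan is to deduce Theorem~\ref{tES} from the classical Second Main Theorem for hyperplanes by passing through a Veronese embedding, the essential point being that the nonlinear bound $2n$ (rather than the Cartan value $n+1$) is forced because the relevant hyperplanes fail to be in general position in the ambient projective space.

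First I would reduce to the case of equal degrees. Replacing $D_i$ by the hypersurface cut out by $Q_i^{m_i}$, where $m_i\deg D_i$ equals a common value $d$, leaves each ratio $m_f(r,D_i)/\deg D_i$ unchanged (since $m_f(r,\{Q_i^{m}=0\})=m\,m_f(r,D_i)+O(1)$) and preserves general position, as the supports are unaffected. Thus I may assume $\deg D_i=d$ for all $i$. Let $v_d\colon\mathbb{P}^n\hookrightarrow\mathbb{P}^M$, $M=\binom{n+d}{d}-1$, be the degree-$d$ Veronese embedding, and set $F=v_d\circ f$, a nonconstant holomorphic map whose image lies in the $n$-dimensional subvariety $V=v_d(\mathbb{P}^n)$. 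Each $D_i$ is the pullback of a hyperplane $H_i\subset\mathbb{P}^M$, one has $m_f(r,D_i)=m_F(r,H_i)+O(1)$ and $T_F(r)=d\,T_f(r)$, and general position of the $D_i$ in $\mathbb{P}^n$ translates exactly into the $H_i$ being in general position with respect to $V$ (any $n+1$ of them meet $V$ in $\emptyset$, and any $k\le n$ of them meet $V$ in dimension $n-k$). After this substitution the desired inequality is equivalent to $\sum_{i=1}^q m_F(r,H_i)\le(2n+\epsilon)T_F(r)$. Cartan's and Nochka's theorems do not apply directly: although $F$ is linearly nondegenerate in $\mathbb{P}^M$ (because $V$ is), the hyperplanes $H_i$ are very far from general position in $\mathbb{P}^M$, so the bound must be extracted from the geometry of $V$ rather than from the ambient space.

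Next I would establish a pointwise general position lemma. By compactness of $V$ and the fact that no $n+1$ of the $H_i$ meet $V$, there is a constant $c>0$ such that for every $x\in V$ at most $n$ of the Weil functions $\lambda_{H_i}(x)$ exceed $c$; consequently
\begin{equation*}
\sum_{i=1}^q\lambda_{H_i}(x)\le\max_{|J|=n}\sum_{i\in J}\lambda_{H_i}(x)+O(1)
\end{equation*}
uniformly on $V$, where $J$ ranges over $n$-element subsets of $\{1,\dots,q\}$. Integrating over the circle $|z|=r$ reduces the problem to bounding $\int_0^{2\pi}\max_{|J|=n}\sum_{i\in J}\lambda_{H_i}(F(re^{i\theta}))\,\frac{d\theta}{2\pi}$.

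The main obstacle is to bound this integral by $(2n+\epsilon)T_F(r)$ off a set of finite Lebesgue measure. For a fixed $n$-subset $J$, the linear forms $\{L_i\}_{i\in J}$ cutting out the $H_i$ restrict to sections of $\O_V(1)$ with no common zero on a Zariski-dense open subset of $V$; together with one auxiliary generic form they define a generically finite rational map $\pi_J\colon V\to\mathbb{P}^n$, and $\sum_{i\in J}\lambda_{H_i}(F)$ is comparable to the proximity of $\pi_J\circ F$ to the coordinate hyperplanes. The factor $2n$ (rather than $n+1$) arises precisely here: one must control the contribution of the derivatives of $F$ along $V$ through a Wronskian and logarithmic-derivative estimate adapted to the nonlinear variety $V$ --- equivalently, through Nochka-type weights attached to the flag of osculating spaces of $F$ inside the span of $V$ --- and it is the nonlinearity of $V$ that degrades the sharp Cartan constant $n+1$ to $2n$. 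Assembling these estimates over the finitely many subsets $J$, taking the union of the corresponding finite-measure exceptional sets, and applying the First Main Theorem to convert the resulting characteristic terms into $T_F(r)=d\,T_f(r)$ yields $\sum_i m_F(r,H_i)\le(2n+\epsilon)T_F(r)$ outside a set of finite Lebesgue measure. Dividing by $d$ and recalling $T_F=d\,T_f$ recovers the stated inequality, and allowing the auxiliary constants to tend to zero gives the conclusion for every $\epsilon>0$.
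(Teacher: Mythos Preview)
The paper does not prove this theorem at all: Theorem~\ref{tES} is quoted from Er\"emenko--Sodin \cite{ES} purely as background for comparison with the non-archimedean results, and no argument for it appears anywhere in the paper. So there is no ``paper's own proof'' to compare against.

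Judged on its own merits, your reduction steps are the standard ones and are fine: equalising degrees, the Veronese embedding, and the pointwise general-position lemma (at each point of $V$ at most $n$ of the Weil functions are large) are all correct and are indeed how most modern expositions begin. The problem is the paragraph you label ``the main obstacle''. After the general-position lemma one must bound
\[
\int_0^{2\pi}\max_{|J|=n}\ \sum_{i\in J}\lambda_{H_i}\bigl(F(re^{i\theta})\bigr)\,\frac{d\theta}{2\pi}
\]
by $(2n+\epsilon)T_F(r)$, and this is exactly where the content of the theorem lies; the maximum sits \emph{inside} the integral, so one cannot simply invoke the First Main Theorem $n$ times. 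Your paragraph gestures at ``a Wronskian and logarithmic-derivative estimate adapted to the nonlinear variety $V$'' and ``Nochka-type weights attached to the flag of osculating spaces'', but no actual inequality is written down, and the sentence ``it is the nonlinearity of $V$ that degrades the sharp Cartan constant $n+1$ to $2n$'' is an explanation of a phenomenon, not a proof. Cartan and Nochka require the hyperplanes to be in (sub)general position in the ambient $\mathbb{P}^M$, which they are not here, and you have not supplied a substitute mechanism that produces the constant $2n$.

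In short: the architecture is right, but the load-bearing estimate is asserted rather than proved. If you want to complete this line, you need either the potential-theoretic argument of Er\"emenko--Sodin (analysing the subharmonic function $\max_J\sum_{i\in J}\log\|f\|^d/|Q_i\circ f|$ directly), or an explicit reduction of each $n$-tuple to a one-variable Nevanlinna situation together with a careful bookkeeping that handles the varying $J(\theta)$; either way, that step must be written out.
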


In the case where all of the hypersurfaces are hyperplanes this is a classical result of Cartan \cite{Ca}.  In view of Theorem \ref{tmain2}, it is natural to ask whether a corresponding improvement to the Er{\"e}menko-Sodin theorem is possible.

\begin{question}
Under the additional assumption that all intersections amongst the hypersurfaces are transverse, can the inequality in Theorem \ref{tES} be improved to
\begin{equation*}
\sum_{i=1}^q \frac{m_f(r,D_i)}{\deg D_i}\leq  \left(2\left(n-1+\max_i\frac{1}{\deg D_i}\right)+\epsilon\right)T_f(r)?
\end{equation*}
\end{question}

Finally, we mention a theorem of Ru \cite{Ru2} in the case where $f$ is assumed to be algebraically nondegenerate.

\begin{theorem}[Ru \cite{Ru2}]
Let $X\subset \mathbb{P}^N$ be a smooth complex projective variety of dimension $n\geq 1$.  Let $D_1,\ldots, D_q$ be hypersurfaces in $\mathbb{P}^N$ in general position with $X$.  Let $f:\mathbb{C}\to X$ be an algebraically nondegenerate holomorphic map.  Then for any $\epsilon>0$,
\begin{equation*}
\sum_{i=1}^q \frac{m_f(r,D_i)}{\deg D_i}\leq  (n+1+\epsilon)T_f(r)
\end{equation*}
for all $r>0$ outside a set of finite Lebesgue measure.
\end{theorem}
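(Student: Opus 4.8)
The plan is to reduce proximity to the $D_i$ to proximity to coordinate hyperplanes for an auxiliary \emph{linearly} nondegenerate map, built from a carefully chosen basis of sections, and then to invoke Cartan's Second Main Theorem. The algebraic nondegeneracy of $f$ enters precisely here: it is what forces the auxiliary map to be linearly nondegenerate, and it is what allows the sharp coefficient $n+1$ rather than the coefficient $2n$ available for arbitrary nonconstant maps in Theorem~\ref{tES}. As a preliminary step I would reduce to the case $\deg D_i=d$ for all $i$: writing $Q_i$ for a defining form of $D_i$ and replacing $D_i$ by the divisor of $Q_i^{d/\deg D_i}$, with $d=\operatorname{lcm}_i\deg D_i$, leaves every ratio $m_f(r,D_i)/\deg D_i$ and the general position hypothesis unchanged.

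Next, general position is used locally. Since $\dim X=n$ and the $D_i$ are in general position with $X$, no $n+1$ of them meet on $X$, so by compactness there is a constant such that, at each $z$, after reordering $|Q_{i_1}(f(z))|\le\cdots\le|Q_{i_q}(f(z))|$, one has $\prod_{j>n}|Q_{i_j}(f(z))|\ge c\,\|f(z)\|^{(q-n)d}$. Hence only the $n$ hypersurfaces closest to $f(z)$ contribute to the proximity, and up to $O(1)$,
\begin{equation*}
\sum_{i=1}^q \lambda_{D_i}(f(z)) \le \max_{\#J=n}\ \sum_{j\in J}\lambda_{D_j}(f(z)).
\end{equation*}
The technical heart is then, for each $n$-element subset $J=\{j_1,\dots,j_n\}$, a filtration of $V_m:=H^0(X,\O_X(md))$ (for large $m$) by the monomials $Q_{j_1}^{a_1}\cdots Q_{j_n}^{a_n}$ with $a_1+\cdots+a_n\le m$. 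Because the $D_j$ meet $X$ properly, the associated graded is governed by the $0$-dimensional intersection $D_{j_1}\cap\cdots\cap D_{j_n}\cap X$, of degree $d^n\deg X$, so a Hilbert-polynomial computation gives $M:=\dim V_m\approx \deg(X)\frac{(md)^n}{n!}$, while choosing a basis $\psi^J_1,\dots,\psi^J_M$ adapted to the filtration (each $\psi^J_k$ divisible by $Q_{j_l}^{a_l(k)}$) yields, by averaging $a_l$ over the simplex $\{\sum a_l\le m\}$,
\begin{equation*}
s_l:=\sum_{k=1}^M\operatorname{ord}_{Q_{j_l}}\psi^J_k \ \ge\ \left(\tfrac{1}{n+1}-\epsilon'\right)mM,\qquad l=1,\dots,n.
\end{equation*}

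Setting $F^J=[\psi^J_1:\cdots:\psi^J_M]\circ f\colon\mathbb{C}\to\P^{M-1}$, the divisibility of each $\psi^J_k$ translates, pointwise, into
\begin{equation*}
\sum_{j\in J}\lambda_{D_j}(f(z)) \le \frac{n+1+\epsilon''}{mM}\sum_{k=1}^M\lambda_{H_k}\bigl(F^J(f(z))\bigr)+O(1),
\end{equation*}
where $H_1,\dots,H_M$ are the coordinate hyperplanes, which are in general position in $\P^{M-1}$. Since $f$ is algebraically nondegenerate and the $\psi^J_k$ form a basis of $H^0(X,\O_X(md))$, each $F^J$ is linearly nondegenerate, so Cartan's Second Main Theorem gives $\sum_k m_{F^J}(r,H_k)\le (M+\epsilon)T_{F^J}(r)$ outside a set of finite measure, with $T_{F^J}(r)=md\,T_f(r)+O(1)$ independently of the basis. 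Combining with the two previous displays and dividing by $d$ yields $\frac1d\sum_i m_f(r,D_i)\le (n+1)(1+o(1)+\epsilon/M)T_f(r)$, and letting $m\to\infty$ absorbs the error into $\epsilon$.

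I expect the main obstacle to be making the passage through the pointwise maximum over the subsets $J$ uniform: a single application of Cartan's theorem requires one linearly nondegenerate map, whereas the closest $n$ hypersurfaces, and hence the relevant basis $\mathcal{B}_J$ and map $F^J$, vary with $z$. This is resolved by Ru's key combinatorial lemma, which organizes all the subset filtrations into one estimate valid simultaneously at every point, so that the section count and the coefficient $n+1$ are genuinely independent of $z$. The precise lattice-point bookkeeping underlying the bound on $s_l$ and $M$ is the other delicate ingredient.
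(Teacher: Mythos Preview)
The paper does not prove this theorem; it is quoted from Ru \cite{Ru2} purely as context, so there is no ``paper's own proof'' to compare your proposal against. That said, your outline is a faithful sketch of Ru's original argument in \cite{Ru2,Ru3}: the reduction to a common degree, the pointwise reduction to the $n$ closest hypersurfaces via general position, the filtration of $H^0(X,\O_X(md))$ by the monomials $Q_{j_1}^{a_1}\cdots Q_{j_n}^{a_n}$, the Hilbert-polynomial asymptotics, the choice of an adapted basis, and the final appeal to Cartan are exactly the ingredients Ru uses.

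The one place where your sketch is slightly imprecise is the resolution of the obstacle you yourself flag. You do not apply Cartan's theorem separately to each $F^J$; rather, since every $\mathcal{B}_J$ is a basis of the \emph{same} space $V_m$, the maps $F^J$ differ only by a linear automorphism of $\P^{M-1}$, so there is a single linearly nondegenerate map $F$ and the various coordinate hyperplanes for the various $J$ become a finite collection of hyperplanes for $F$. One then applies Cartan's theorem once to this entire collection (in the form valid for arbitrary hyperplanes, not just $M$ in general position), and the pointwise maximum over $J$ is absorbed because at each $z$ one selects the $M$ hyperplanes coming from the relevant $J$. This is precisely what you call ``Ru's key combinatorial lemma,'' and with that clarification your outline matches Ru's proof.
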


When $X=\mathbb{P}^n$, this result was proved in an earlier paper of Ru \cite{Ru3}.  When $X=\mathbb{P}^n$ and all of the hypersurfaces are hyperplanes, this is contained in the well-known Second Main Theorem of Cartan \cite{Ca}.

\section{Notation and definitions}
\label{snot}
Let $K$ be an algebraically closed field, of arbitrary characteristic, complete with respect to a non-archimedean absolute value $|\ |$.
Let $f(z)=\sum_{i=0}^\infty a_iz^i$ be an entire function on $K$.  For every real number $r\geq 0$, define
\begin{equation*}
|f|_r=\sup_i |a_i|r^i=\sup\{|f(z)|\mid z\in K, |z|\leq r\}=\sup\{|f(z)|\mid z\in K, |z|=r\}.
\end{equation*}

Let $f:K\to \P^N$ be a nonconstant analytic map.  Then we may write $f=(f_0,\ldots, f_N)$ where $f_0,\ldots, f_N$ are entire functions without a common zero.  We define the characteristic function $T_f(r)$ by
\begin{equation*}
T_f(r)=\|f\|_r:=\log\max\{|f_0|_r,\ldots,|f_N|_r\}.
\end{equation*}
Then $T_f(r)$ is well-defined, independent of the representation of $f$, up to an additive constant.

Let $D$ be a hypersurface in $\mathbb{P}^N$ defined by a homogeneous polynomial $Q\in K[x_0,\ldots, x_N]$ of degree $d$.  Consider the entire function $Q\circ f=Q(f_0,\ldots, f_N)$ on $K$, and assume from now on that $Q\circ f\not \equiv 0$, i.e., that $Q\circ f$ is not identically zero.  Let $n_f(r,D)$ be the number of zeros of $Q\circ f$ in the disk $B[r]=\{z\in K\mid |z|\leq r\}$, counting multiplicity.  This is independent of the choice of the defining polynomial $Q$ and the representation of $f$.  For $r>0$, we define the counting function
\begin{equation*}
N_f(r,D)=\int_0^r \frac{n_f(t,D)-n_f(0,D)}{t}dt+n_f(0,D)\log r
\end{equation*}
and the proximity function
\begin{equation*}
m_f(r,D)=\log \frac{\| f\|_r^d}{|Q\circ f|_r}.
\end{equation*}
Note that $m_f(r,D)$ depends on the choice of $Q$ only up to an additive constant.  A variety $X$ over $K$ is said to be $K$-hyperbolic if every analytic map from $K$ to $X$ is constant.

Let $D_1,\ldots, D_q$ be hypersurfaces in $\mathbb{P}^N$ over $K$.  We say that $D_1,\ldots, D_q$ are in general position if $\dim \cap_{i\in I}D_i\leq N-|I|$ for every subset $I\subset \{1,\ldots, q\}$ of cardinality $|I|\leq N+1$, where we set $\dim \emptyset =-1$.  If $X\subset \mathbb{P}^N$ is a projective variety over $K$ of dimension $n$, then we say that $D_1,\ldots, D_q$ are in general position with $X$ if $\dim \cap_{i\in I}D_i\cap X\leq n-|I|$ for every subset $I\subset \{1,\ldots, q\}$ of cardinality $|I|\leq n+1$.

Let $Y$ and $Z$ be closed subschemes of $\mathbb{P}^N$ with corresponding ideal sheaves $\I_Y$ and $\I_Z$, respectively.  We define $Y\subset Z$ if $\I_Z\subset \I_Y$.  We let $Y\cap Z$ denote the closed subscheme of $\mathbb{P}^N$ with ideal sheaf $\I_Y+\I_Z$.  If $M$ is a positive integer, we let $MZ$ be the closed subscheme of $\mathbb{P}^N$ with ideal sheaf $\I_Z^M$.  We let $\Supp Z$ denote the closed subscheme of $\mathbb{P}^N$ with the same underlying set of points as $Z$ and the reduced induced scheme structure.  By Hilbert's Nullstellensatz, for any closed subscheme $Z$, $Z\subset M\Supp Z$ for some positive integer $M$.

\section{A Second Main Theorem}
\label{SSMT}

We now state and prove our main theorem.

\begin{theorem}
\label{tmain}
Let $X\subset \mathbb{P}^N$ be a projective variety over $K$ of dimension $n\geq 1$.  Let $D_1,\ldots, D_q$ be hypersurfaces in $\mathbb{P}^N$ over $K$ that are in general position with $X$.  Let $M$ be a positive integer such that for any subset $I\subset \{1,\ldots, q\}$ of cardinality $|I|=n$, 
\begin{equation*}
\bigcap_{i\in I}D_i\cap X\subset M\Supp \left(\bigcap_{i\in I}D_i\cap X\right),
\end{equation*}
where we view $D_1,\ldots, D_q$, and $X$ as closed subschemes of $\mathbb{P}^N$.  Let $f:K\to X$ be a nonconstant analytic map whose image is not completely contained in any of the hypersurfaces $D_1,\ldots, D_q$.  Then for all $r\geq 1$,
\begin{equation*}
\sum_{i=1}^q \frac{m_f(r,D_i)}{\deg D_i}\leq  \left(n-1+\max_i \frac{M}{\deg D_i}\right)T_f(r)+O(1).
\end{equation*}
\end{theorem}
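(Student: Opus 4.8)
The plan is to reduce the Second Main Theorem to a statement about the counting functions via the First Main Theorem, and then to bound the relevant counting functions using a careful local analysis at the zeros of the $Q_i\circ f$. By the First Main Theorem applied on $X$ (restricting the ambient linear system), $m_f(r,D_i) = \deg(D_i)\, T_f(r) - N_f(r,D_i) + O(1)$, so the claimed inequality is equivalent to
\begin{equation*}
\sum_{i=1}^q \frac{N_f(r,D_i)}{\deg D_i} \geq \left(q - n + 1 - \max_i\frac{M}{\deg D_i}\right) T_f(r) + O(1).
\end{equation*}
Thus the heart of the matter is a lower bound on the total truncated-type counting, and this is where the non-archimedean hypotheses do the real work: in the non-archimedean setting $T_f(r)$ is essentially $\max_i N_f(r,D_i)$ up to $O(1)$ for generic linear forms (this is the non-archimedean analogue of the fact that there is no "ramification defect" — the key input underlying Ru's and An's theorems), so controlling the sum of the $N_f(r,D_i)$ from below reduces to a combinatorial/multiplicity count at each point $z_0$ in a disk.

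The key local step: fix $z_0$ with $|z_0| \le r$. Since $D_1,\dots,D_q$ are in general position with $X$, at most $n$ of the hypersurfaces can pass through the point $f(z_0)$. Relabel so that $f(z_0) \in D_1 \cap \cdots \cap D_m$ with $m \le n$. I want to show that the contribution of $z_0$ to $\sum_i \frac{1}{\deg D_i}\,(\text{ord}_{z_0}(Q_i\circ f))$ is at least the contribution of $z_0$ to $(q - n + 1 - \max_i M/\deg D_i)$ times "the local degree," where the local degree is measured by $\max_j \text{ord}_{z_0}(\ell_j\circ f)$ over generic linear forms $\ell_j$ — equivalently, by the multiplicity of $f$ at $z_0$ relative to a generic hyperplane. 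The scheme $\bigcap_{i\in I}D_i\cap X$ for any $I$ of size $n$ is, by hypothesis, contained in $M$ times its support; since $f(z_0)$ lies in the support and the image of $f$ is not contained in any $D_i$, this forces $\text{ord}_{z_0}(Q_i\circ f) \geq$ (local multiplicity of $f$) for each of the $m$ indices $i$ with $f(z_0)\in D_i$, and moreover for at least one such $i$ we gain the factor saving: $\deg(D_i)\cdot(\text{local mult}) \leq M\cdot \text{ord}_{z_0}(Q_i\circ f)$, or more precisely the $M$-thickening lets us bound one of the orders from below by $\frac{\deg D_i}{M}$ times the local multiplicity while the others are bounded below by the local multiplicity with coefficient $1/\deg D_i$ each. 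Summing the bound $\sum_{i : f(z_0)\in D_i}\frac{\text{ord}_{z_0}(Q_i\circ f)}{\deg D_i} \geq \bigl(m - 1 + \frac{M}{\deg D_{i_0}}\bigr)\cdot(\text{local mult of }f\text{ at }z_0) / (\text{something})$ — wait, one must be careful to extract the "$-1$" correctly; the point is that among the $m \le n$ hypersurfaces through $f(z_0)$, the general-position/transversality data forces the product of the local multiplicities to be at least $(\text{local mult of }f)^{m}$, but one of the factors can be taken as large as $\frac{\deg D_{i_0}}{M}(\text{local mult})$, yielding the asserted coefficient after dividing through by degrees and comparing with $n - 1 + \max_i M/\deg D_i$.

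Integrating this pointwise inequality over the disk (i.e., applying $\int_0^r \frac{dt}{t}$ to the counting of zeros, as in the definition of $N_f$) and using that $\sum_{z_0}(\text{local mult of }f\text{ at }z_0\text{ along a generic hyperplane})$ contributes $T_f(r) + O(1)$ — this is the non-archimedean Nevanlinna first main theorem / the fact that $N_f(r,H) = T_f(r) + O(1)$ for a hyperplane $H$ in general position — gives
\begin{equation*}
\sum_{i=1}^q \frac{N_f(r,D_i)}{\deg D_i} \geq \left(q - n + 1 - \max_i\frac{M}{\deg D_i}\right)T_f(r) + O(1),
\end{equation*}
which combined with the First Main Theorem yields the theorem. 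The main obstacle I anticipate is the local multiplicity bookkeeping at a point $f(z_0)$ lying on several of the $D_i$: one needs the $M$-thickening hypothesis to be used exactly once per point (to produce the single factor with the improved coefficient) while getting the plain lower bound $\text{ord}_{z_0}(Q_i\circ f) \geq (\deg D_i)\cdot\mu_{z_0}$ (with $\mu_{z_0}$ the relevant local intersection multiplicity of $f$) for the remaining ones; making this rigorous requires choosing a generic linear form $\ell$ vanishing at $f(z_0)$, comparing $\text{ord}_{z_0}(Q_i \circ f)$ with $\deg(D_i)\,\text{ord}_{z_0}(\ell\circ f)$ via the inclusion $D_i \cap X \subset (\deg D_i)\,\{\ell = 0\}\cap X$ locally near $f(z_0)$ — but this last inclusion needs the general-position hypothesis and a Nullstellensatz-type argument on the local ring, and stitching together the bounds across all $z_0$ so that the genericity of $\ell$ can be taken uniformly (or handled by the standard trick of reducing to finitely many relevant points) is the delicate point.
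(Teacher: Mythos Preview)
Your reformulation via the First Main Theorem into a lower bound on $\sum_i N_f(r,D_i)/\deg D_i$ is correct, but the subsequent local analysis does not work. The pointwise inequality you are reaching for --- roughly, that at each $z_0$ the quantity $\sum_{i:f(z_0)\in D_i}\ord_{z_0}(Q_i\circ f)/\deg D_i$ is bounded below by something that sums over $z_0$ to $(q-n+1-\max_i M/\deg D_i)T_f(r)$ --- simply fails. At a point $z_0$ where $f(z_0)$ lies on a single $D_i$, transversally, the left side contributes $1/\deg D_i$; there is no local reference quantity at such a $z_0$ that integrates to $T_f(r)$. Your specific claimed bound $\ord_{z_0}(Q_i\circ f)\geq (\deg D_i)\,\mu_{z_0}$ (and the related inclusion ``$D_i\cap X\subset (\deg D_i)\{\ell=0\}\cap X$ locally'') is false: a smooth hypersurface of degree $d$ is locally cut out by one equation, not $d$, so a curve meeting it transversally has contact order $1$, not $d$. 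The lower bound you want on $\sum_i N_f(r,D_i)/\deg D_i$ is a genuinely global statement that does not localize to individual zeros.

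The paper's proof proceeds entirely differently and does not pass through counting functions at all; it exploits the non-archimedean fact that $m_f(r,D_i)=\log\bigl(\|f\|_r^{\deg D_i}/|Q_i\circ f|_r\bigr)$ is determined by a single absolute value rather than an integral over a circle. For fixed $r$, reorder so that $m_f(r,D_1)\geq\cdots\geq m_f(r,D_q)$. A Nullstellensatz identity $x_j^{m_j}=\sum_{i\le n+1} a_{ij}Q_i$ on $X$ (valid because any $n+1$ of the $D_i$ have empty common intersection with $X$), together with the ultrametric inequality, forces $m_f(r,D_i)=O(1)$ for all $i>n$. To handle the $n$th term, pick hyperplanes $H_1,\dots,H_t$ through the finitely many points of $D_1\cap\cdots\cap D_n\cap X$; the $M$-hypothesis gives $(L_1\cdots L_t)^M=\sum_{i\le n} b_iQ_i$ on $X$, hence $m_f(r,D_n)\leq M\sum_j m_f(r,H_j)+O(1)$. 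A second application of the Nullstellensatz argument to $D_1,\dots,D_{n-1},H_1,\dots,H_t$ shows that at most $n$ of these proximity functions are unbounded, and the First Main Theorem bounds each unbounded one by $T_f(r)+O(1)$; since $M/\deg D_n<1$, collecting terms gives the result. The ``reorder for each fixed $r$ and bound all but $n$ proximity terms by $O(1)$'' step is the essential non-archimedean input, and it is precisely this that your counting-function approach discards.
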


If all intersections amongst $D_1,\ldots, D_q$, and $X$ are transverse, then we may take $M=1$ in Theorem \ref{tmain}.  Thus, Theorem \ref{tmain2} from the Introduction follows immediately.

\begin{proof}
If $\max_i\frac{M}{\deg D_i}\geq 1$, then the theorem follows from Theorem \ref{SMTAn}.  So we may assume that $\max_i\frac{M}{\deg D_i}< 1$.
Fix $r\geq 1$.  Let $f=(f_0,\ldots, f_N)$, where $f_0,\ldots, f_N$ are entire functions without a common zero.  After reindexing, we can assume that
\begin{equation*}
m_f(r,D_1)\geq m_f(r,D_2)\geq \cdots \geq m_f(r,D_q).
\end{equation*}
Let $D_1,\ldots, D_q$ be defined by homogeneous polynomials $Q_1,\ldots, Q_q\in K[x_0,\ldots, x_N]$.  We first claim that there is a constant $C$, depending only on $Q_1,\ldots, Q_q$, and $X$, such that 
\begin{equation*}
m_f(r,D_i)\leq C, \quad i=n+1,\ldots, q.
\end{equation*}
This fact is used in the proofs of the Second Main Theorems of An and Ru, but we recall the argument for completeness.  Since the divisors $D_i$ are in general position with $X$, by Hilbert's Nullstellensatz, applied to $Q_1,\ldots, Q_{n+1}$ and the defining polynomials of $X$, we have
\begin{equation*}
x_j^{m_j}=\sum_{i=1}^{n+1}a_{ij}(x_0,\ldots, x_N)Q_i(x_0,\ldots, x_N)\quad \text{on $X$}, \quad j=0,\ldots, N,
\end{equation*}
for some positive integers $m_j$ and some homogeneous polynomials $a_{ij}\in K[x_0,\ldots, x_N]$, $\deg a_{ij}=m_j-\deg D_i$, $1\leq i\leq n+1, 0\leq j\leq N$.  Composing with $f$, this implies that
\begin{equation*}
|f_j|_r^{m_j}\leq C'\max_{1\leq i\leq n+1} |f|_r^{m_j-\deg D_i}|Q_i\circ f|_r,\quad j=0,\ldots, N,
\end{equation*}
where $C'$ is the maximum of the absolute values of the coefficients of the polynomials $a_{ij}$, $1\leq i\leq n+1, 0\leq j\leq N$.  Choosing $j$ so that $|f_j|_r=|f|_r$ and canceling $|f|_r^{m_j}$ from both sides gives
\begin{align*}
1&\leq C' \max_{1\leq i\leq n+1} \frac{|Q_i\circ f|_r}{|f|_r^{\deg D_i}}\\
&\leq C' \frac{|Q_{n+1}\circ f|_r}{|f|_r^{\deg D_{n+1}}}.
\end{align*}
Then $m_f(r,D_i)\leq \log C'$ for all $i>n$, as claimed.

We now handle $m_f(r,D_n)$.   Let
\begin{equation*}
(X\cap D_1\cap\cdots \cap D_n)(K)=\{P_1,\ldots, P_t\}.
\end{equation*}
Let $H_1,\ldots, H_t$ be hyperplanes through $P_1,\ldots, P_t$, respectively, such that $D_1,\ldots, D_{n-1}$, $H_1,\ldots, H_t$ are in general position with $X$ and the image of $f$ is not completely contained in any of the hyperplanes $H_1,\ldots, H_t$.  Let $H_i$ be defined by a linear form $L_i\in K[x_0,\ldots, x_N]$, $i=1,\ldots, t$.  Since 
\begin{equation*}
X\cap D_1\cap\cdots \cap D_n \subset M\Supp (X\cap D_1\cap\cdots \cap D_n),
\end{equation*}
it follows that
\begin{equation*}
(L_1\cdots L_t)^M=\sum_{i=1}^nb_iQ_i \quad \text{on $X$}
\end{equation*}
for some homogeneous polynomials $b_i\in K[x_0,\ldots, x_N]$, $\deg b_i=Mt-\deg D_i$, $i=1,\ldots, n$.  Then as before,
\begin{align*}
|L_1\circ f|_r^M\cdots |L_t\circ f|_r^M&\leq C''\max_{1\leq i\leq n}|f|_r^{Mt-\deg D_i}|Q_i\circ f|_r,\\
&\leq C''|f|_r^{Mt}\frac{|Q_n\circ f|_r}{|f|_r^{\deg D_n}},
\end{align*}
for some constant $C''$ independent of $r$.  Dividing both sides by $|f|_r^{Mt}$ and taking logarithms, we get
\begin{equation*}
m_f(r,D_n)\leq M\sum_{i=1}^t m_f(r,H_i)+\log C''.
\end{equation*}

It follows that
\begin{equation*}
\sum_{i=1}^q \frac{m_f(r,D_i)}{\deg D_i}\leq  \sum_{i=1}^{n-1}\frac{m_f(r,D_i)}{\deg D_i}+\frac{M}{\deg D_n}\sum_{i=1}^tm_f(r,H_i)+O(1).
\end{equation*}
Since $D_1,\ldots, D_{n-1},H_1,\ldots, H_t$ are in general position with $X$, by the same argument that was given at the beginning of the proof, all but $n$ of the proximity functions $m_f(r,D_1),\ldots, m_f(r,D_{n-1}), m_f(r,H_1),\ldots, m_f(r,H_t)$ are bounded by some constant not depending on $r$.  By the First Main Theorem,
\begin{align*}
\frac{m_f(r,D_i)}{\deg D_i}&\leq T_f(r)+O(1), \quad &&i=1,\ldots, n-1,\\	
m_f(r,H_i)&\leq T_f(r)+O(1), \quad &&i=1,\ldots, t.
\end{align*}
Since we have assumed that $\max_i \frac{M}{\deg D_i}<1$, this implies that,
\begin{equation*}
\sum_{i=1}^q \frac{m_f(r,D_i)}{\deg D_i}\leq  (n-1)T_f(r)+\max_i \frac{M}{\deg D_i}T_f(r)+O(1).
\end{equation*}

\end{proof}

As a consequence of Theorem \ref{tmain}, we derive a slight generalization of the result of An, Wang, and Wong (Corollary \ref{AWW}).

\begin{corollary}
Let $X\subset \mathbb{P}^N$ be a projective variety over $K$ of dimension $n\geq 1$.  Let $D_1,\ldots, D_n$ be hypersurfaces in $\mathbb{P}^N$ such that $X,D_1,\ldots, D_n$ intersect transversally.  Then $X\setminus\cup_{i=1}^nD_i$ is $K$-hyperbolic if $\deg D_i\geq 2$ for $1\leq i\leq n$.
\end{corollary}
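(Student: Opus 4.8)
The plan is to deduce this from Theorem~\ref{tmain2} by a short argument by contradiction. Suppose $f\colon K\to X$ is a nonconstant analytic map with $f(K)\subset X\setminus\bigcup_{i=1}^n D_i$. Since $f$ is nonconstant its image is infinite, so for each $i$ we have $f(K)\not\subset D_i$ and $Q_i\circ f\not\equiv 0$ (where $Q_i$ is a defining polynomial of $D_i$), and hence $m_f(r,D_i)$, $N_f(r,D_i)$, and $T_f(r)$ are all defined. The key point is that $f(K)\cap D_i=\emptyset$ forces $Q_i\circ f$ to be a zero-free entire function, so $n_f(r,D_i)\equiv 0$ and therefore $N_f(r,D_i)\equiv 0$ for every $i$.

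I would then combine this with the First Main Theorem, $m_f(r,D_i)+N_f(r,D_i)=(\deg D_i)T_f(r)+O(1)$, to obtain $m_f(r,D_i)/\deg D_i=T_f(r)+O(1)$ for $i=1,\ldots,n$, and hence $\sum_{i=1}^n m_f(r,D_i)/\deg D_i=nT_f(r)+O(1)$. On the other hand, all intersections amongst $X,D_1,\ldots,D_n$ are transverse by hypothesis, so Theorem~\ref{tmain2} applies with $q=n$ and bounds this same sum by $\bigl(n-1+\max_i\tfrac{1}{\deg D_i}\bigr)T_f(r)+O(1)$. Using $\deg D_i\geq 2$, so that $\max_i\tfrac{1}{\deg D_i}\leq\tfrac12$, and comparing the two estimates gives $nT_f(r)\leq(n-\tfrac12)T_f(r)+O(1)$, i.e.\ $T_f(r)=O(1)$.

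To conclude I would invoke the elementary fact that a nonconstant analytic map to $\mathbb{P}^N$ has unbounded characteristic function: writing $f=(f_0,\ldots,f_N)$ with $f_j=\sum_k a_{jk}z^k$, if $T_f(r)=\log\max_j|f_j|_r$ stays bounded as $r\to\infty$ then $|a_{jk}|r^k$ is bounded for every $j$ and $k$, which forces $a_{jk}=0$ for $k\geq 1$; then every $f_j$ is constant and $f$ is constant, a contradiction. Hence no such nonconstant $f$ exists, i.e.\ $X\setminus\bigcup_{i=1}^n D_i$ is $K$-hyperbolic. (Corollary~\ref{AWW} is the special case $X=\mathbb{P}^n$.)

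There is no genuine obstacle beyond Theorem~\ref{tmain2} itself; the one thing worth isolating is that the hypothesis $\deg D_i\geq 2$ is used precisely to ensure $\max_i\tfrac{1}{\deg D_i}<1$, which is exactly what makes the bound from Theorem~\ref{tmain2} incompatible with the First Main Theorem. For hyperplanes the argument degenerates, consistent with the fact that $\mathbb{P}^n$ minus $n$ hyperplanes is not $K$-hyperbolic.
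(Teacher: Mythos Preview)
Your argument is correct and essentially identical to the paper's own proof: assume a nonconstant $f$ exists, use the First Main Theorem (with $N_f(r,D_i)=0$) to get $\sum_i m_f(r,D_i)/\deg D_i = nT_f(r)+O(1)$, bound this by $(n-\tfrac12)T_f(r)+O(1)$ via Theorem~\ref{tmain2} and $\deg D_i\ge 2$, and contradict $T_f(r)\to\infty$. You have simply spelled out a few details (why $N_f(r,D_i)=0$, why $T_f$ is unbounded) that the paper leaves implicit.
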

\begin{proof}
Assume that there exists a nonconstant analytic map $f:K\to X\setminus\cup_{i=1}^nD_i$.  Then $\frac{m_f(r,D_i)}{\deg D_i}=T_f(r)+O(1)$, $i=1,\ldots, n$.  By Theorem \ref{tmain}, we find that for $r\geq 1$,
\begin{align*}
\sum_{i=1}^n \frac{m_f(r,D_i)}{\deg D_i}=nT_f(r)+O(1)&\leq  \left(n-1+\max_i \frac{1}{\deg D_i}\right)T_f(r)+O(1)\\
&\leq \left(n-\frac{1}{2}\right)T_f(r)+O(1).
\end{align*}
Since $\lim_{r\to\infty}T_f(r)\to \infty$, this is a contradiction.
\end{proof}

Finally, we show the sharpness of inequality \eqref{shin}.

\begin{theorem}
\label{tsharp}
Let $n$ and $d$ be positive integers.  There exist nonsingular hypersurfaces $D_1,\ldots, D_n$ in $\mathbb{P}^n$ of degree $d$, intersecting transversally, and a nonconstant analytic map $f:K\to X$, with image not completely contained in any of the hypersurfaces $D_1,\ldots, D_n$, such that for all $r\geq 1$,
\begin{equation*}
\sum_{i=1}^n \frac{m_f(r,D_i)}{d}=\left(n-1+\frac{1}{d}\right)T_f(r)+O(1).
\end{equation*}
\end{theorem}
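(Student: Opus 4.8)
The plan is to write down an essentially explicit example based on a line. Use homogeneous coordinates $x_0,\ldots,x_n$ on $\P^n$, let $L=\{x_2=\cdots=x_n=0\}$, let $P=[1:0:\cdots:0]\in L$, and take $f(z)=(z:1:0:\cdots:0)$. Then $f$ is nonconstant, its image is the Zariski-dense subset $L\setminus\{P\}$ of the line $L$, and for $r\geq 1$ one has $T_f(r)=\log\max\{|z|_r,1\}=\log r$. Fix distinct $a_1,\ldots,a_{d-1}\in K$ with $|a_k|\leq 1$, and look for defining polynomials of the shape
\begin{align*}
Q_i&=x_1^d+\sum_{j=2}^n x_jA_{i,j}\qquad(1\leq i\leq n-1),\\
Q_n&=x_1\prod_{k=1}^{d-1}(x_0-a_kx_1)+\sum_{j=2}^n x_jB_j,
\end{align*}
with $A_{i,j},B_j$ homogeneous of degree $d-1$ still to be chosen. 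These shapes are forced by the value-distribution bookkeeping: the restrictions $Q_i|_L=x_1^d$ ($i<n$) and $Q_n|_L=x_1\prod_k(x_0-a_kx_1)$ are exactly what will make the final estimate sharp, while the $A_{i,j},B_j$ are free parameters whose only job is to make the hypersurfaces $D_i=\{Q_i=0\}$ nonsingular and the intersections transverse.

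The substantive step is choosing the $A_{i,j}$ and $B_j$, and I would argue that a generic choice works. For $i<n$ the family $\{c\,x_1^d+\sum_{j\geq 2}x_jA_j\}$ has base locus exactly the single point $P$, and the family $\{x_1\prod_k(x_0-a_kx_1)+\sum_{j\geq2}x_jB_j\}$ (as the $B_j$ vary) has base locus the $d$ points $P,[a_1:1:0:\cdots:0],\ldots,[a_{d-1}:1:0:\cdots:0]$; away from these base points a Bertini-type argument (characteristic-free, since the families in question are amply nondegenerate) gives nonsingularity of each $D_i$ and transversality of all intersections for a generic choice. At the base points one checks matters directly: the linear part of $Q_i(1,x_1,\ldots,x_n)$ at the origin is $\sum_{j\geq 2}A_{i,j}(1,0,\ldots,0)x_j$ for $i<n$ and $x_1+\sum_{j\geq 2}B_j(1,0,\ldots,0)x_j$ for $i=n$, so $D_1,\ldots,D_n$ all pass through $P$ and meet transversally there precisely when the $(n-1)\times(n-1)$ matrix $(A_{i,j}(1,0,\ldots,0))_{1\leq i\leq n-1,\,2\leq j\leq n}$ is invertible — a generic condition that also makes each $D_i$, $i<n$, nonsingular at $P$ — while at each point $[a_k:1:0:\cdots:0]$ one computes $\partial Q_n/\partial x_0=\prod_{m\neq k}(a_k-a_m)\neq 0$, so $D_n$ is automatically nonsingular there, and these points lie on none of $D_1,\ldots,D_{n-1}$. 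I expect this transversality verification — reconciling the prescribed restrictions to $L$ with genericity, and in particular controlling the common point $P$ of all $n$ hypersurfaces — to be the main obstacle; in characteristic dividing $d$ the monomial $x_1^d$ contributes nothing to differentials, but the argument is arranged so that the auxiliary forms still supply nonsingularity, and the same scheme goes through.

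The remaining computation is immediate. For $1\leq i\leq n-1$ we have $Q_i\circ f=Q_i(z,1,0,\ldots,0)=1$, hence $m_f(r,D_i)=dT_f(r)-\log|Q_i\circ f|_r=d\log r$ for $r\geq 1$. For $i=n$ we have $Q_n\circ f=\prod_{k=1}^{d-1}(z-a_k)$, and since $|z-a_k|_r=\max\{r,|a_k|\}=r$ for $r\geq 1$ this gives $|Q_n\circ f|_r=r^{d-1}$ and hence $m_f(r,D_n)=d\log r-(d-1)\log r=\log r$. Adding up,
\begin{equation*}
\sum_{i=1}^n\frac{m_f(r,D_i)}{d}=(n-1)\log r+\frac1d\log r=\Big(n-1+\frac1d\Big)T_f(r)\qquad(r\geq 1),
\end{equation*}
which is in fact the claimed equality with no error term. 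Finally, $f$ is nonconstant, $Q_i\circ f\not\equiv 0$ for every $i$, and the image $L\setminus\{P\}$ lies in no $D_i$ because $D_i\cap L$ is a finite scheme of length $d$; this completes the proof modulo the transversality step above.
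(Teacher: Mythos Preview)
Your proof is correct and follows essentially the same approach as the paper: the same line $L$, point $P$, map $f(z)=(z,1,0,\ldots,0)$, and the same requirement that $D_i\cap L=\{P\}$ for $i<n$ while $D_n$ meets $L$ transversally at $P$. The only difference is that the paper simply asserts the existence of such nonsingular transversally-intersecting hypersurfaces, whereas you write down explicit candidate polynomials and sketch the Bertini/genericity argument (including the check at $P$ and the characteristic-$p$ caveat); your treatment is thus more detailed on exactly the point the paper leaves implicit.
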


\begin{proof}
We prove this for $n\geq 2$, the case $n=1$ being easy.  Consider $\mathbb{P}^n$ with coordinates $(x_0,\ldots, x_n)$.  Let $L$ be the line in $\mathbb{P}^n$ given by $x_2=x_3=\cdots=x_n=0$.  Let $P$ be the point $(1,0,0,\ldots,0)$.  We let $D_1,\ldots, D_n$ be nonsingular hypersurfaces in $\mathbb{P}^n$ of degree $d$, intersecting transversally, such that
\begin{enumerate}
\item  $P\in \cap_{i=1}^nD_i$,
\item  $L\cap D_i=\{P\}$, \quad $i=1,\ldots, n-1$.
\end{enumerate}
Let $f:K\to \mathbb{P}^n$ be defined by $f=(z,1,0,0,\ldots, 0)$.  Then it is immediate from the definitions that for $r\geq 1$,
\begin{equation*}
T_f(r)=\log r.
\end{equation*}
Let $D_i$ be defined by the homogeneous polynomial $Q_i\in K[x_0,\ldots, x_n]$ of degree $d$, $i=1,\ldots, n$.  Since $L\cap D_i=\{P\}$, $i=1,\ldots, n-1$, it follows that $Q_i\circ f$ is a constant for $i=1,\ldots, n-1$.  Then for $r\geq 1$,
\begin{equation*}
\frac{m_f(r,D_i)}{d}=\frac{1}{d}\log \frac{\| f\|_r^d}{|Q\circ f|_r}=\log r+O(1).
\end{equation*}
Since $D_1,\ldots, D_n$ intersect transversally at $P$ and $L\cap D_i=\{P\}$, $i=1,\ldots, n-1$, it follows that $L$ intersects $D_n$ with multiplicity one at $P$.  Then $Q_n\circ f$ is a polynomial of degree $d-1$ and for $r\geq 1$,
\begin{equation*}
\frac{m_f(r,D_n)}{d}=\frac{1}{d}\log \frac{\| f\|_r^d}{|Q\circ f|_r}=\frac{\log r}{d}+O(1).
\end{equation*}
So for $r\geq 1$,
\begin{equation*}
\sum_{i=1}^n \frac{m_f(r,D_i)}{d}=\left(n-1+\frac{1}{d}\right)T_f(r)+O(1)
\end{equation*}
as desired.
\end{proof}

\subsection*{Acknowledgments}

The author would like to thank William Cherry for helpful comments and suggestions.

\bibliography{SMT}
\end{document}